\documentclass[journal,twoside,web]{ieeecolor}
 
\let\labelindent\relax
\usepackage{lcsys}
\usepackage{generic}
\usepackage[colorlinks=true,linkcolor=blue, filecolor=blue, urlcolor=blue,citecolor=blue]{hyperref}
\usepackage{amsmath,amssymb,amsthm,epsfig,enumitem,mathtools,mathrsfs,bm}
\usepackage{times}
\usepackage{mathptmx} 
\usepackage{textcomp}
\usepackage{graphicx}
\usepackage{caption}
\usepackage{subfig}
\usepackage{float}
\usepackage{cite} 

\usepackage[dvipsnames]{xcolor}
\usepackage[font=small]{caption}
\usepackage{tikz}
\usepackage{pgfplots}
\usepackage{pgfplotstable}
\usepgfplotslibrary{groupplots}
\pgfplotsset{compat=1.18}
\usetikzlibrary{arrows.meta,positioning,calc,decorations.pathreplacing}
\definecolor{pOne}{RGB}{0,90,181}      
\definecolor{pTwo}{RGB}{220,50,32}     
\definecolor{shared}{RGB}{80,80,80} 

\theoremstyle{plain}
\newtheorem{theorem}{Theorem}

\newtheorem{lemma}{Lemma}

\theoremstyle{definition}
\newtheorem{definition}{Definition}
\newtheorem{assumption}{Assumption}
\theoremstyle{remark}
\newtheorem{remark}{Remark}
 
\newcommand{\Z}{\mathrm{0}}
\newcommand{\I}{\mathrm{I}} 
\newcommand{\rank}{{\rm rank}}
\newcommand{\N}{\mathsf{N}}
\newcommand{\K}{\mathsf{K}}
\newcommand{\su}{\bm{u}}
\newcommand{\col}[1]{\mathtt{col}\{#1\}}  
\newcommand{\row}[1]{\mathtt{row}\{#1\}}  
\newcommand{\Bdg}{\mathtt{Bdg}}  

\DeclareMathOperator*{\argmin}{arg\,min}

\begin{document} 
 \title{Open-Loop Stackelberg LQ Difference Games with Coupled-Affine Inequality Constraints:\\
 	An Exact OCP--LCS/LCQP Reformulation}
	\author{Shrestha Ghosh and Puduru Viswanadha Reddy, \IEEEmembership{Member, IEEE}
		\thanks{S. Ghosh and P. V. Reddy are with the Department of Electrical
			Engineering, Indian Institute of Technology Madras, Chennai, India.
			(e-mail: ee25s002@smail.iitm.ac.in, vishwa@ee.iitm.ac.in).}}	
\maketitle
\thispagestyle{empty}
	\begin{abstract}In this letter, we study finite-horizon linear--quadratic Stackelberg
		difference games with coupled-affine state-control inequality constraints.
		Under the stated assumptions, we show that generalized open-loop Stackelberg
		equilibria admit an exact reformulation as an optimal control problem subject
		to a discrete-time linear complementarity system. Eliminating the dynamic
		variables yields a large-scale linear complementarity quadratic program, together with an explicit recovery map for the follower
		strategy. This reformulation enables the numerical computation of these
		equilibria. We illustrate the proposed approach using a constrained
		network-flow game.
	\end{abstract}
	
\begin{IEEEkeywords}
Linear--quadratic difference games, open--loop Stackelberg equilibrium, coupled-affine inequality constraints, linear 	complementarity systems
\end{IEEEkeywords}

\section{Introduction} 
Hierarchical decision making arises in multi-agent systems in which one
decision maker, the leader, commits to a strategy and another, the follower,
responds after observing that commitment. Stackelberg games provide a
framework for such sequential interactions \cite{Stackelberg:52}, with
applications in demand response, smart-grid energy management, and
hierarchical electric-vehicle charging \cite{Mah13,Che16b,Yoo16}. In dynamic
games, the Stackelberg solution concept depends on the information structure.
Under open-loop information, the leader announces a complete time-indexed
strategy and the follower responds over a   horizon; under feedback
information, the players select state-dependent policies. Both information
structures have been studied extensively for unconstrained dynamic games
\cite{Che72, Basar:1999,Xu:2015}.

Practical dynamic games involve state and control constraints, including
capacity, safety, saturation, and budget limits. In Stackelberg dynamic games
with coupled inequality constraints, the follower's feasible set
depends on the leader's announcement, while the induced follower response may
affect the leader's feasibility. As a result, the leader's anticipation of
the follower's response yields a constrained bilevel optimal control problem.
The central technical challenge is to characterize the follower's constrained
response and the induced leader-feasibility conditions.

Existing  approaches in constrained  Stackelberg dynamic games either restrict
the coupling structure or adopt a different equilibrium concept. The open-loop
formulation in \cite{Mon:19} assumes an unconstrained follower and reduces
the equilibrium conditions to a  linear complementarity problem
(LCP). The   generalized open-loop Nash formulation in
\cite{Moh23} is simultaneous rather than sequential. The model in
\cite{Moh25} studies a restricted feedback
Stackelberg--Nash game with sequential and simultaneous interactions, rather
than the fully coupled sequential setting. The work in \cite{Li24} studies
approximate local feedback Stackelberg equilibria. To the best of our
knowledge, an exact characterization of finite-horizon open-loop
Stackelberg equilibria for linear--quadratic difference games (LQDGs) with
coupled-affine inequality constraints remains lacking.

To address this gap, we study the above class of dynamic games. In
particular, this letter makes three contributions. First, we formulate the
generalized open-loop Stackelberg equilibrium (GOLSE) problem through a
follower-feasible reaction set and a leader-admissible strategy  set that
incorporates constraints induced by the follower's response. Second, a
Riccati-based costate transformation converts the follower's KKT conditions
into a coupled discrete-time linear complementarity system (LCS). Embedding
this system in the leader's problem yields an optimal-control problem subject
to LCS constraints (OCP--LCS), for which we establish an exact correspondence
with the GOLSE formulation and an affine recovery map for the follower
strategy. Third, eliminating the state and costate trajectories yields an
initial-state-parametrized linear complementarity quadratic program (LCQP),
in contrast to the restricted or different Stackelberg structures considered
in \cite{Mon:19,Moh25}, where the equilibrium conditions reduce to an LCP.
The resulting LCQP is generally nonconvex due to its complementarity
constraints. We show that its global minimizers correspond to GOLSEs in the
primal decision variables.

The remainder of the letter is organized as follows. Section~\ref{sec:formulation}
formulates the constrained linear--quadratic difference game and defines the
generalized open-loop Stackelberg equilibrium. Section~\ref{sec:GOLSE} derives
the follower reduction and the OCP--LCS and LCQP reformulations.
Section~\ref{sec:numerical} presents numerical results, and
Section~\ref{sec:conclusion} concludes the letter.

\subsubsection*{Notation} 
Let \(\mathbb{R}^n\), \(\mathbb{R}^{n\times m}\), and \(\mathbb{S}^n\)
denote the sets of real \(n\)-vectors, real \(n\times m\) matrices, and
real symmetric \(n\times n\) matrices, respectively. For a matrix \(A\),
\(A^\top\) and \(\mathrm{rank}(A)\) denote its transpose and rank;
\(\I_n\) denotes the \(n\times n\) identity matrix, and \(\Z\) a zero
matrix of appropriate dimension. For \(A\in\mathbb{S}^n\), \(A\succ0\)
denotes positive definiteness. For \(x\in\mathbb{R}^n\) and
\(A\in\mathbb{S}^n\), \(\|x\|_A^2:=x^\top A x\), and \([x]_i\) denotes the
\(i\)th component of \(x\). The operators
\(\col{A_i}_{i=1}^N\), \(\row{A_i}_{i=1}^N\), and
\(\Bdg\{A_i\}_{i=1}^N\) denote the block column, block row, and block
diagonal matrix formed from \(\{A_i\}_{i=1}^N\), respectively. Vectors
\(x,y\in\mathbb{R}^n\) are complementary if \(x\geq0\), \(y\geq0\), and
\(x^\top y=0\), denoted by \(0\leq x\perp y\geq0\).
\section{Problem Formulation}
\label{sec:formulation}
In this section, we describe the class of LQDGs with coupled inequality
constraints and define the associated open-loop Stackelberg equilibrium.
Let $\N:=\{1,2\}$ denote the set of players, where player $1$ is the
leader and player $2$ is the follower. Let
$\K:=\{0,1,\ldots,K\}$ be the set of decision instants and define
$\K_l:=\K\setminus\{K\}$ and $\K_r:=\K\setminus\{0\}$. At each $k\in\K_l$,
player $i\in\N$ selects an action $u_k^i\in\mathbb{R}^{m_i}$ to influence
the evolution of the state vector $x_k\in\mathbb{R}^{n}$ according to the
following discrete--time dynamics
\begin{subequations}
	\begin{align}
		x_{k+1}&=A_kx_k+B_k^1u_k^1+B_k^2u_k^2,
		\quad x_0=\bar{x}_0,     \label{eq:state_dyn}
	\end{align}
	where $A_k\in\mathbb{R}^{n\times n}$, $B_k^i\in\mathbb{R}^{n\times m_i}$,
	and $\bar{x}_0\in\mathbb{R}^n$ is the initial state. For each $i\in\N$,
	$\su^i:=\col{u_k^i}_{k\in\K_l}\in\mathbb{R}^{Km_i}$ denotes the open-loop
	strategy of player $i$. For every strategy pair $(\su^1,\su^2)$, let
	$\bm{x}(\su^1,\su^2)$ denote the corresponding state trajectory, with
	$x_k(\su^1,\su^2)\in\mathbb{R}^n$ denoting its state at $k\in\K$. We
	further assume that the decision variables of player $i\in\N$ at each
	$k\in\K_l$ satisfy the following inequality constraints:
	\begin{align}
		g_k^i(x_k,u_k^1,u_k^2)
		:=
		M_k^ix_k+N_k^{i1}u_k^1+N_k^{i2}u_k^2+r_k^i
		\geq 0,
		\label{eq:affine_constraints}
	\end{align}
	where $M_k^i\in \mathbb R^{c_i\times n}$,
	$N_k^{ij}\in \mathbb R^{c_i\times m_j}$ for $j\in\N$, and
	$r_k^i\in \mathbb R^{c_i}$. Each player $i\in\N$ seeks to minimize the
	following stage-additive cost functional
	\begin{align}
		J_i(\su^1,\su^2)
		&=\tfrac12 \|x_K\|^2_{Q_K^i}\notag \\
		&\quad +\tfrac12\sum_{k\in\K_l}\big(
		\|x_k\|^2_{Q_k^i}
		+\|u_k^1\|^2_{R_k^{i1}}
		+\|u_k^2\|^2_{R_k^{i2}}\big),
		\label{eq:standard_cost}
	\end{align}
	\label{eq:LQDG}%
\end{subequations}
where $Q_k^i\in\mathbb S^n$, $k\in\K$, and
$R_k^{ij}\in\mathbb S^{m_j}$ for $i,j\in\N$ and $k\in\K_l$.

For an unconstrained LQDG, the follower's best response is affine in the
leader's announcement under suitable regularity conditions
\cite{Xu:2015}, \cite[Chapter 7]{Basar:1999}. This representation need not
apply under the coupled inequality constraints
\eqref{eq:affine_constraints}. In this setting, follower feasibility depends
on the leader's announcement, and the induced follower response may affect
leader feasibility. We therefore introduce the following sets 
to formulate
the constrained open-loop Stackelberg problem.

For a fixed leader announcement $\su^1\in \mathbb R^{Km_1}$, the feasible
open-loop strategy set of the follower is given by
\begin{align}
	\Omega_2(\su^1)&:=\Big\{\su^2\in \mathbb R^{Km_2}~|~ \notag\\
	&\quad g_k^2\big(x_k(\su^1,\su^2),u_k^1,u_k^2\big)\geq 0,~
	k\in \K_l \Big\}.
\end{align}
Then, the follower-feasible leader-announcement set is
\begin{align}
	\mathsf U_1
	:=
	\Big\{
	\su^1\in \mathbb R^{Km_1}~|~
	\Omega_2(\su^1)\neq \varnothing
	\Big\}.
\end{align}
For $\su^1\in \mathsf U_1$, the follower's rational reaction set is given by
\begin{align}
	\mathsf T(\su^1)
	:=
	\argmin_{\su^2\in \Omega_2(\su^1)}
	J_2(\su^1,\su^2).
	\label{eq:reactionset}
\end{align}
The response map \(\su^1\mapsto\mathsf T(\su^1)\) may be set-valued.
We impose the following regularity assumption. 
\begin{assumption}
	\label{assum:regularity}
	\begin{enumerate}
		\item $\mathsf U_1 \neq \varnothing$.
		\item For each $\su^1\in \mathsf U_1$, $\Omega_2(\su^1)$ is bounded.
	\end{enumerate}
\end{assumption}
Item~(1) is a joint-feasibility condition that can be checked by a linear
program. After eliminating the state through \eqref{eq:state_dyn},
\(\Omega_2(\su^1)\) is a closed convex polyhedron. Hence, item~(2) implies
that \(\Omega_2(\su^1)\) is compact.
The set of admissible leader announcements is given by
\begin{align}
	\Omega_1
	&:=
	\Big\{
	\su^1\in\mathsf U_1\ \Big|\
	\mathsf{T}(\su^1)\ \text{is single-valued}, \notag\\
	&\qquad
	g_k^1\big(
	x_k(\su^1,\mathsf{T}(\su^1)),
	u_k^1,\mathsf{T}_k(\su^1)
	\big)\geq0,\
	\forall k\in\K_l
	\Big\}.
	\label{eq:leader_feasible_set}
\end{align}
 Here, \(\mathsf T_k(\su^1)\in\mathbb R^{m_2}\) denotes the follower's
stage-\(k\) action induced by \eqref{eq:reactionset}.
\begin{remark}
	If $\mathsf T(\su^1)$ is not a singleton, the leader's cost depends on the
	follower's optimal response, which the leader cannot enforce. A worst-case
	selection of the follower's response therefore yields a well-defined but
	conservative problem. The literature commonly restricts attention to
	announcements with single-valued responses; see
	\cite[Theorems 7.1 and 7.2]{Basar:1999}. We adopt this convention in
	\eqref{eq:leader_feasible_set}.
\end{remark}

The leader's optimization problem is
\begin{align}
	\min_{\su^1\in \Omega_1} J_1(\su^1,\mathsf T(\su^1)).
	\label{eq:leaders_problem}
\end{align}
The generalized open-loop Stackelberg equilibrium for the LQDG
\eqref{eq:LQDG} is defined as follows.

\begin{definition}[Generalized open-loop Stackelberg equilibrium]
	\label{def:gse}
	A strategy profile $(\su^{1\star},\su^{2\star})$ is a generalized open-loop
	Stackelberg equilibrium (GOLSE) for the LQDG \eqref{eq:LQDG} if
	\begin{enumerate}[label=(\roman*)]
		\setlength\itemsep{.25em}
		\item $\su^{1\star}\in\Omega_1$ and
		$\su^{2\star}=\mathsf{T}(\su^{1\star})$;
		\item $
		J_1\big(\su^{1\star},\mathsf{T}(\su^{1\star})\big)
		\leq
		J_1\big(\su^1,\mathsf{T}(\su^1)\big),
		\quad \forall \su^1\in\Omega_1$.
	\end{enumerate}
\end{definition}

\section{Generalized Open-Loop Stackelberg Equilibrium}
\label{sec:GOLSE}
In this section, we provide an exact reformulation of the GOLSE as an
optimal-control problem subject to a discrete-time linear complementarity
system.
 
\subsection{Follower's Problem}

For a fixed leader announcement $\su^1\in\mathsf U_1$, the follower solves
the constrained optimal control problem
\begin{align}
	&\min_{\su^2\in\mathbb R^{Km_2}}~J_2(\su^1,\su^2)
	\label{eq:follower_ocp}\\
	\text{s.t. }&
	\eqref{eq:state_dyn},~~
	g_k^2(x_k,u_k^1,u_k^2)\geq0,~ k\in\K_l. \notag
\end{align}
The associated Lagrangian is
\begin{align*}
	\mathsf L_2
	={}&J_2(\su^1,\su^2)
	+\sum_{k\in\K_l}p_{k+1}^\top
	\Big(A_kx_k+B_k^1u_k^1+B_k^2u_k^2-x_{k+1}\Big) \notag\\
	&-\sum_{k\in\K_l}\mu_k^\top
	\Big(M_k^2x_k+N_k^{21}u_k^1+N_k^{22}u_k^2+r_k^2\Big)
	+p_0^\top(\bar{x}_0-x_0),
\end{align*}
where $p_k\in\mathbb R^n$, $k\in\K$, are the costate variables associated with
\eqref{eq:state_dyn}, and $\mu_k\in\mathbb R_+^{c_2}$, $k\in\K_l$, are the
Lagrange multipliers associated with \eqref{eq:affine_constraints}. The KKT
conditions are
\begin{subequations}\label{eq:follower_KKT}
	\begin{align}
		x_{k+1}
		&=A_kx_k+B_k^1u_k^1+B_k^2u_k^{2\star},~x_0=\bar{x}_0,
		\label{eq:state}\\
		p_k
		&=Q_k^2x_k+A_k^\top p_{k+1}-{M_k^2}^\top\mu_k,~p_K=Q^2_Kx_K,
		\label{eq:costate}\end{align} \begin{align}
		0
		&=R_k^{22}u_k^{2\star}+{B_k^2}^\top p_{k+1}
		-{N_k^{22}}^\top\mu_k,
		\label{eq:stationarity}\\
		0
		&\leq\mu_k\perp
		M_k^2x_k+N_k^{21}u_k^1+N_k^{22}u_k^{2\star}+r_k^2
		\geq0.
		\label{eq:followercomplementarity}
	\end{align}%
\end{subequations}
for $k\in\K_l$. Thus, \eqref{eq:follower_KKT} constitutes a coupled
discrete-time linear complementarity system (LCS); see \cite{Heemels:2000}.
We write
$\{\bm x,\su^1,\su^{2\star},\bm p,\bm\mu\}=
	\{x_k,p_k,k\in\K,~u_k^1,u_k^{2\star},\mu_k,k\in\K_l\}$ for a solution of this system,
when one exists.

Next, we provide conditions under which the follower's reaction set
\eqref{eq:reactionset} is single-valued. To this end, we make the following
assumption.
\begin{assumption}
	\label{assum:convexity}
	The solution of the following matrix Riccati difference equation (RDE)
	\begin{subequations}
		\begin{align}
			P_k&=Q_k^2+A_k^\top P_{k+1}A_k
			-V_k^\top\Gamma_k^{-1}V_k,\quad P_K=Q_K^2,
			\label{eq:riccati_P}\\
			\Gamma_k&:=R_k^{22}+{B_k^2}^\top P_{k+1}B_k^2,\quad 
			V_k:={B_k^2}^\top P_{k+1}A_k
			\label{eq:riccati_Gamma},
		\end{align}
		\label{eq:RDE}%
	\end{subequations}
	for $k\in\K_l$ exists, and the matrices $\{\Gamma_k,~k\in\K_l\}$ are
	positive definite.
\end{assumption}

For $k\in\K_l$, define
\begin{align*}
	&L_k^x:=-\Gamma_k^{-1}V_k,~~
	L_k^u:=-\Gamma_k^{-1}{B_k^2}^\top P_{k+1}B_k^1, \notag\\
	&L_k^\zeta:=-\Gamma_k^{-1}{B_k^2}^\top,~~
	L_k^\mu:=\Gamma_k^{-1}{N_k^{22}}^\top,\notag\\
	&\overline A_k:=A_k+B_k^2L_k^x,~~
	\overline B_k:=B_k^1+B_k^2L_k^u,~~
	\overline C_k:=B_k^2L_k^\mu,\\
	&\overline D_k:=B_k^2L_k^\zeta,~~
	\overline F_k:=A_k^\top P_{k+1}B_k^1+V_k^\top L_k^u,~~
	\overline G_k:=V_k^\top L_k^\mu-{M_k^2}^\top,\\
	&E_k^i:=M_k^i+N_k^{i2}L_k^x,~~
	S_k^i:=N_k^{i1}+N_k^{i2}L_k^u,\\
	&W_k^i:=N_k^{i2}L_k^\zeta,~~
	U_k^i:=N_k^{i2}L_k^\mu,\quad i\in\{1,2\}.
\end{align*} 
	\begin{lemma}
		\label{lem:follower}
		Let Assumptions~\ref{assum:regularity} and \ref{assum:convexity} hold.
		Then $\su^2\mapsto J_2(\su^1,\su^2)$ is strictly convex, and the follower's
		reaction set \eqref{eq:reactionset} is single-valued; consequently the KKT
		conditions \eqref{eq:follower_KKT} are necessary and sufficient for
		optimality in \eqref{eq:follower_ocp}.
		Further, 	$\{\bm x, \su^1, \su^{2\star}, \bm p, \bm \mu\}$
		satisfies the LCS \eqref{eq:follower_KKT} if and only if,
		with $\zeta_k=p_k-P_kx_k$,
		\begin{align}
			\mathsf T_k(\su^1)=u_k^{2\star}
			=L_k^xx_k+L_k^uu_k^1+L_k^\zeta\zeta_{k+1}+L_k^\mu\mu_k,
			\label{eq:follower_response}
		\end{align}
		and  $\{\bm x,\su^1,\bm \zeta,\bm \mu \}
		=	\{x_k,\zeta_k,k\in \K,~u_k^1,\mu_k,k\in \K_l\}
		$ satisfies the following LCS defined for $k\in \K_l$
		\begin{subequations}\label{eq:LCS}
			\begin{align}
				&x_{k+1}=\overline A_kx_k+\overline B_ku_k^1
				+\overline C_k\mu_k+\overline D_k\zeta_{k+1},
				\quad x_0=\bar{x}_0,\label{eq:LCS_state}\\
				&\zeta_k=\overline A_k^\top\zeta_{k+1}
				+\overline F_ku_k^1+\overline G_k\mu_k,
				\quad \zeta_K=0,\label{eq:LCS_costate}\\
				&0\leq\mu_k\perp
				E_k^2x_k+S_k^2u_k^1+W_k^2\zeta_{k+1}
				+U_k^2\mu_k+r_k^2\geq0.
				\label{eq:LCS_complementarity}
			\end{align}%
		\end{subequations}
	\end{lemma}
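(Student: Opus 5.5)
The plan has three parts: establish strict convexity, derive uniqueness and the KKT equivalence, and then perform the Riccati change of costate variable.

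\emph{Strict convexity.} Fix $\su^1$ and eliminate the state through \eqref{eq:state_dyn}. Then $J_2(\su^1,\cdot)$ is a quadratic $\tfrac12\su^{2\top}H\su^2+(\cdot)^\top\su^2+\text{const}$, whose Hessian $H$ does not depend on $\su^1$. It suffices to show $H\succ0$, i.e.\ that the homogeneous cost $\tfrac12\|x_K\|^2_{Q_K^2}+\tfrac12\sum_k(\|x_k\|^2_{Q_k^2}+\|u_k^2\|^2_{R_k^{22}})$, taken along $x_{k+1}=A_kx_k+B_k^2u_k^2$ with $x_0=0$, is positive for every $\su^2\neq0$. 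For this I use the completion-of-squares identity from the RDE \eqref{eq:RDE}. Define $\Delta_k:=\|x_{k+1}\|^2_{P_{k+1}}-\|x_k\|^2_{P_k}$ and sum over $k$. Since $x_0=0$ and $P_K=Q_K^2$, the homogeneous cost equals
\[
\tfrac12\sum_k\|u_k^2+\Gamma_k^{-1}V_kx_k\|^2_{\Gamma_k}.
\]
Because $\Gamma_k\succ0$, this sum is $\ge0$. If it vanishes, then $u_k^2=L_k^xx_k$ for every $k$, and with $x_0=0$ induction gives $x_k=0$ and $u_k^2=0$. Hence $H\succ0$. The feasible set $\Omega_2(\su^1)$ is a nonempty compact polyhedron by Assumption~\ref{assum:regularity}, so a minimizer exists and is unique. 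The constraints are affine, so the linear constraint qualification holds and KKT is necessary; convexity makes it sufficient. Thus \eqref{eq:follower_KKT} characterizes $\mathsf T(\su^1)$.

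\emph{Equivalence with \eqref{eq:follower_response}--\eqref{eq:LCS}.} This is algebraic. Substitute $p_k=P_kx_k+\zeta_k$ into \eqref{eq:stationarity} and use \eqref{eq:state} to write $x_{k+1}$. This gives
\[
0=R_k^{22}u_k^2+{B_k^2}^\top P_{k+1}(A_kx_k+B_k^1u_k^1+B_k^2u_k^2)+{B_k^2}^\top\zeta_{k+1}-{N_k^{22}}^\top\mu_k,
\]
that is, $\Gamma_ku_k^2=-V_kx_k-{B_k^2}^\top P_{k+1}B_k^1u_k^1-{B_k^2}^\top\zeta_{k+1}+{N_k^{22}}^\top\mu_k$. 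Inverting $\Gamma_k$ yields exactly \eqref{eq:follower_response}. Plugging this back into \eqref{eq:state} gives \eqref{eq:LCS_state}, and plugging it into the constraint in \eqref{eq:followercomplementarity} gives \eqref{eq:LCS_complementarity} with the stated $E_k^2,S_k^2,W_k^2,U_k^2$.

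\emph{Costate recursion (main bookkeeping step).} Write \eqref{eq:costate} as
\[
P_kx_k+\zeta_k=Q_k^2x_k+A_k^\top P_{k+1}x_{k+1}+A_k^\top\zeta_{k+1}-{M_k^2}^\top\mu_k,
\]
and insert $x_{k+1}=A_kx_k+B_k^1u_k^1+B_k^2u_k^2$ with $u_k^2$ from \eqref{eq:follower_response}. The $x_k$ terms read $Q_k^2+A_k^\top P_{k+1}A_k+V_k^\top L_k^x$, which equals $P_k$ by \eqref{eq:riccati_P}, so they cancel. The $\zeta_{k+1}$ terms combine to $(A_k^\top+V_k^\top L_k^\zeta)\zeta_{k+1}$. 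Since $V_k^\top L_k^\zeta=-A_k^\top P_{k+1}B_k^2\Gamma_k^{-1}{B_k^2}^\top=(B_k^2L_k^x)^\top$, this coefficient is $\overline A_k^\top$. The $u_k^1$ and $\mu_k$ coefficients are $\overline F_k$ and $V_k^\top L_k^\mu-{M_k^2}^\top=\overline G_k$. At the terminal time, $p_K=Q_K^2x_K=P_Kx_K$ gives $\zeta_K=0$. Every step is reversible: given a solution of \eqref{eq:LCS} with \eqref{eq:follower_response}, set $p_k:=P_kx_k+\zeta_k$ and run the computation backward to recover \eqref{eq:follower_KKT}. This proves the ``if and only if''.

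The only delicate point is the cancellation of the $x_k$ terms and the symmetry identity behind $\overline A_k^\top$; everything else is substitution.
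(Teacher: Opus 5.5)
Your proposal is correct and follows essentially the same route as the paper. Your homogeneous system with $x_0=0$ is exactly the paper's difference trajectory $\delta_k=x_k-\tilde x_k$, completed via the RDE to $\tfrac12\sum_k\|u_k^2+\Gamma_k^{-1}V_k\delta_k\|_{\Gamma_k}^2$, and you then make the same substitution $p_k=P_kx_k+\zeta_k$ into the KKT system. Your explicit checks that the $x_k$-coefficient collapses to $P_k$ and that $V_k^\top L_k^\zeta=(B_k^2L_k^x)^\top$ fill in the costate step that the paper only sketches.
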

	
	\begin{proof}	 
		\underline{\emph{Strict convexity}:} Fix $\su^1\in \mathsf U_1$, and let
		$\bm x(\su^1,\su^2)$ and $\tilde{\bm x}(\su^1,\bm 0)$ be state trajectories
		generated by the strategy pairs $(\su^1,\su^2)$ and $(\su^1,\bm 0)$
		respectively. Define $\delta_k:=x_k-\tilde{x}_k$ for $k\in \K$. Then, it
		follows from \eqref{eq:state_dyn} that
		$\delta_{k+1}=A_k\delta_k+B_k^2u_k^2$ and $\delta_0=0$. Using this, the
		follower's cost \eqref{eq:standard_cost} is rewritten as
		\begin{align}
			&J_2(\su^1,\su^2)=J_2(\su^1,\bm 0)
			+\sum_{k=0}^K \tilde{x}_k^\top Q_k^2\delta_k \notag\\
			&\qquad +\tfrac12 \delta_K^\top Q_K^2\delta_K
			+\tfrac12 \sum_{k=0}^{K-1}
			\left(
			\delta_k^\top Q_k^2\delta_k+
			{u_k^2}^\top R_k^{22}u_k^2
			\right).
			\label{eq:modcost}
		\end{align}
		As $\delta_0=0$ and $P_K=Q_K^2$, the telescoping sum satisfies
		$\sum_{k=0}^{K-1}
		(\delta_{k+1}^\top P_{k+1}\delta_{k+1}
		-\delta_k^\top P_k\delta_k)
		=
		\delta_K^\top P_K\delta_K-\delta_0^\top P_0\delta_0
		=
		\delta_K^\top Q_K^2\delta_K$.
		Therefore, the quadratic terms in \eqref{eq:modcost} equal
		$\tfrac12\sum_{k=0}^{K-1}
		\big(
		\delta_{k+1}^\top P_{k+1}\delta_{k+1}
		-\delta_k^\top P_k\delta_k
		+\delta_k^\top Q_k^2\delta_k+
		{u_k^2}^\top R_k^{22}u_k^2
		\big)$.
		Using \(\delta_{k+1}=A_k\delta_k+B_k^2u_k^2\) and completing squares gives 
		$\tfrac12\sum_{k=0}^{K-1}
		\big(
		\delta_k^\top
		[-P_k+A_k^\top P_{k+1}A_k+Q_k^2
		-V_k^\top\Gamma_k^{-1}V_k]\delta_k
		+
		(u_k^2+\Gamma_k^{-1}V_k\delta_k)^\top\Gamma_k
		(u_k^2+\Gamma_k^{-1}V_k\delta_k)
		\big)$. Then, using \eqref{eq:riccati_P} in \eqref{eq:modcost} yields
		\begin{align*}
			J_2(\su^1,\bm 0)
			+\sum_{k=0}^K \tilde{x}_k^\top Q_k^2\delta_k +\tfrac12\sum_{k=0}^{K-1}
			\|u_k^2+\Gamma_k^{-1}V_k\delta_k\|_{\Gamma_k}^2.
		\end{align*}
The first term is independent of \(\su^2\), the second is linear in
\(\su^2\), and the final term is quadratic. Since
\(\Gamma_k\succ0\) for  \(k\in\K_l\), the final term is nonnegative
and vanishes only if
\(u_k^2+\Gamma_k^{-1}V_k\delta_k=0\) for every \(k\in\K_l\). Since
\(\delta_0=0\), this gives \(u_0^2=0\) and \(\delta_1=0\). Induction gives \(\su^2=\bm 0\). Hence, the quadratic term is positive definite in
\(\su^2\), and \(J_2(\su^1,\cdot)\) is strictly convex.
		Assumption~\ref{assum:regularity} makes \(\Omega_2(\su^1)\) nonempty and
		compact, so the follower problem has a unique minimizer. Because its
		constraints are affine, \eqref{eq:follower_KKT} is necessary and sufficient
		for optimality.
 
		\noindent 	\underline{\emph{Reduction}:}
 $\Rightarrow$ Let \eqref{eq:follower_KKT} hold and put
$\zeta_k:=p_k-P_kx_k$, so $\zeta_K=0$ by $p_K=Q_K^2x_K$ and
$P_K=Q_K^2$. Substituting
$p_{k+1}=P_{k+1}x_{k+1}+\zeta_{k+1}$ and \eqref{eq:state} into
\eqref{eq:stationarity} and collecting terms gives
\[
\Gamma_ku_k^{2\star}
=
-V_kx_k-{B_k^2}^\top P_{k+1}B_k^1u_k^1
-{B_k^2}^\top\zeta_{k+1}+{N_k^{22}}^\top\mu_k,
\]
which by $\Gamma_k\succ0$ is \eqref{eq:follower_response}. Substituting
\eqref{eq:follower_response} into \eqref{eq:state} yields
\eqref{eq:LCS_state}, and into $g_k^2$ yields
\eqref{eq:LCS_complementarity}. Substitution into \eqref{eq:costate}, followed by
\eqref{eq:riccati_P}, gives \eqref{eq:LCS_costate}.

\noindent
$\Leftarrow$ Conversely, let
$\{\bm x,\su^1,\bm\zeta,\bm\mu\}$ satisfy \eqref{eq:LCS}, and define
$u_k^{2\star}$ by \eqref{eq:follower_response} and
$p_k:=P_kx_k+\zeta_k$ for $k\in\K$. Equation \eqref{eq:LCS_state},
together with \eqref{eq:follower_response}, gives \eqref{eq:state}.
Since \(\zeta_K=0\) and \(P_K=Q_K^2\), the terminal condition
\(p_K=Q_K^2x_K\) holds.
Substituting \eqref{eq:LCS_state} and \eqref{eq:follower_response} into
$p_k=P_kx_k+\zeta_k$, and using \eqref{eq:riccati_P} and
\eqref{eq:LCS_costate}, gives \eqref{eq:costate}. Further,
\eqref{eq:follower_response} is equivalent to
\eqref{eq:stationarity}, while \eqref{eq:LCS_complementarity} is
equivalent to \eqref{eq:followercomplementarity}. Hence,
$\{\bm x,\su^1,\su^{2\star},\bm p,\bm\mu\}$ satisfies
\eqref{eq:follower_KKT}.
\end{proof}
 
\begin{remark}
	\label{rem:stagewise-uniqueness}
	Since \(\Gamma_k\succ0\),
	\(U_k^2=N_k^{22}\Gamma_k^{-1}{N_k^{22}}^\top\succeq0\). Hence, for fixed
	\((x_k,u_k^1,\zeta_{k+1})\), \eqref{eq:LCS_complementarity} is a monotone
	linear complementarity problem (LCP) in \(\mu_k\)
	\cite[Chapter~3]{Cottle:2009}. If
	\(\rank(N_k^{22})=c_2\leq m_2\), then \(U_k^2\succ0\). Since every positive
	definite matrix is a \(P\)-matrix, the stagewise LCP has a unique solution
	\(\mu_k\) \cite{Cottle:2009}.
\end{remark}
\begin{remark}
	\label{rem:nonuniqueness}
	Fix \(\su^1\). Lemma~\ref{lem:follower} guarantees uniqueness of the
	follower's strategy \(\su^{2\star}\), but not necessarily of the  
	variables \((\bm\zeta,\bm\mu)\). The coupled system
	\eqref{eq:LCS_state}--\eqref{eq:LCS_costate} can admit multiple admissible
	pairs \((\bm x,\bm\zeta)\). Thus, distinct admissible pairs
	\((\bm\zeta,\bm\mu)\) may represent the same unique follower strategy
	\(\su^{2\star}\).
\end{remark}
\subsection{Leader's problem}
We next reformulate the leader's problem exactly as an optimal control
problem with linear complementarity constraints (OCP--LCS). Define
\begin{align*}
	&\bm x_l:=\col{x_k}_{k\in\K_l},~
	\bm\zeta_r:=\col{\zeta_k}_{k\in\K_r},\\
	&H_k^1:=\Bdg\big(Q_k^1,R_k^{11},R_k^{12}\big),~z_k:=\col{x_k,u_k^1,\zeta_{k+1},\mu_k},\\
	&L_k:=\begin{bmatrix}
		\I_n & \Z_{n\times m_1} & \Z_{n\times n} & \Z_{n\times c_2}\\
		\Z_{m_1\times n} & \I_{m_1} & \Z_{m_1\times n} & \Z_{m_1\times c_2}\\
		L_k^x & L_k^u & L_k^\zeta & L_k^\mu
	\end{bmatrix},~ k\in\K_l.
\end{align*}
Whenever \eqref{eq:LCS} holds,
$L_kz_k=\col{x_k,u_k^1,\mathsf T_k(\su^1)}$. We define the
recovery map of the follower's response \eqref{eq:follower_response} by
\begin{align}
	\mathsf R(\bm x_l,\su^1,\bm\zeta_r,\bm\mu)
	:=
	\col{L_k^x x_k+L_k^u u_k^1+L_k^\zeta\zeta_{k+1}
		+L_k^\mu\mu_k}_{k\in\K_l}.
	\label{eq:recovery}
\end{align}
 
Substituting \eqref{eq:follower_response} into the leader's objective gives
\begin{align}
	\overline J_1(\bm x,\su^1,\bm\zeta,\bm\mu)
	&:=J_1\big(\su^1,\mathsf T(\su^1)\big)\notag\\
	&=\tfrac12x_K^\top Q_K^1x_K+
	\tfrac12\sum_{k\in\K_l}z_k^\top L_k^\top H_k^1L_kz_k.
	\label{eq:leadersobjective}
\end{align}

\begin{theorem}[OCP-LCS reformulation]
	\label{thm:exact}
	Suppose Assumptions~\ref{assum:regularity}
	and~\ref{assum:convexity} hold. Consider the following problem:
	\begin{subequations}\label{eq:leaderprogram}
	\begin{align}
		&\mathrm{OCP\text{-}LCS}:\quad \min_{\bm x,\su^1,\bm \mu,\bm \zeta}\quad \overline J_1(\bm x,\su^1,\bm \zeta,\bm \mu)\\
		\text{s.t. }
		&x_{k+1}=\overline A_kx_k+\overline B_ku_k^1
		+\overline C_k\mu_k+\overline D_k\zeta_{k+1},\label{eq:leadA}\\
		&\zeta_k=\overline A_k^\top\zeta_{k+1}
		+\overline F_ku_k^1+\overline G_k\mu_k,\label{eq:leadB}\\
		&0\leq\mu_k\perp E_k^2x_k+S_k^2u_k^1+W_k^2\zeta_{k+1}
		+U_k^2\mu_k+r_k^2\geq0,\label{eq:leadC}\\
		&E_k^1x_k+S_k^1u_k^1+W_k^1\zeta_{k+1}+U_k^1\mu_k+r_k^1\geq 0,\label{eq:leadD}\\
		&\quad k\in\K_l,~ x_0=\bar{x}_0,~\zeta_K=0.\notag
	\end{align}
\end{subequations}
	Let $\mathsf S_{\mathrm{OCP\text{-}LCS}}$ denote the set of global
	minimizers of \eqref{eq:leaderprogram}. Then the set
	$\mathsf S_{\mathrm{GOLSE}}$ of GOLSEs for the LQDG
	\eqref{eq:LQDG} is
	\begin{align}
		\mathsf S_{\mathrm{GOLSE}}
		&=
		\Big\{
		\big(\su^1,\mathsf R(\bm x_l,\su^1,\bm\zeta_r,\bm\mu)\big)
		~\big|~\notag \\
		&\qquad \quad (\bm x,\su^1,\bm\zeta,\bm\mu)
		\in\mathsf S_{\mathrm{OCP\text{-}LCS}}
		\Big\}.
		\label{eq:soleq}
	\end{align} 
\end{theorem}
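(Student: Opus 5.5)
The plan is to show that the feasible set of \eqref{eq:leaderprogram}, projected onto the primal pair $(\su^1,\su^2)$ via $\su^2=\mathsf R(\bm x_l,\su^1,\bm\zeta_r,\bm\mu)$, coincides with the graph $\{(\su^1,\mathsf T(\su^1)) : \su^1\in\Omega_1\}$. The second step is to check that on this feasible set the objective $\overline J_1$ equals $J_1(\su^1,\mathsf T(\su^1))$. Once both facts are in hand, global minimizers of \eqref{eq:leaderprogram} map exactly onto pairs satisfying Definition~\ref{def:gse}, and every GOLSE is attained this way.

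\textbf{Step 1 (feasibility correspondence).} Take $(\bm x,\su^1,\bm\zeta,\bm\mu)$ feasible for \eqref{eq:leaderprogram}. Constraints \eqref{eq:leadA}--\eqref{eq:leadC}, with the boundary conditions, are exactly the LCS \eqref{eq:LCS}. By the ``if'' direction of Lemma~\ref{lem:follower}, $u_k^{2}:=\mathsf R_k(\cdot)$ together with $p_k=P_kx_k+\zeta_k$ solves the KKT system \eqref{eq:follower_KKT}. In particular $\su^2\in\Omega_2(\su^1)$, so $\su^1\in\mathsf U_1$. Sufficiency of KKT and uniqueness from Lemma~\ref{lem:follower} then give $\mathsf T(\su^1)=\{\su^2\}$, which is single-valued, and $\bm x=\bm x(\su^1,\mathsf T(\su^1))$. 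Substituting \eqref{eq:follower_response} into $g_k^1$ and using the definitions of $E_k^1,S_k^1,W_k^1,U_k^1$ shows that \eqref{eq:leadD} is precisely $g_k^1\ge0$. Hence $\su^1\in\Omega_1$.

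Conversely, let $\su^1\in\Omega_1$. Then $\mathsf T(\su^1)$ exists, because $\Omega_2(\su^1)$ is nonempty and compact by Assumption~\ref{assum:regularity}, and it is unique. The constraints are affine, so KKT multipliers $(\bm p,\bm\mu)$ exist and KKT is necessary. The ``only if'' direction of Lemma~\ref{lem:follower} then produces $(\bm x,\su^1,\bm\zeta,\bm\mu)$ satisfying \eqref{eq:LCS} with $\mathsf R=\mathsf T(\su^1)$. The leader constraint $g_k^1\ge0$ translates into \eqref{eq:leadD}, so this tuple is feasible.

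\textbf{Step 2 (objective).} On feasible points, $L_kz_k=\col{x_k,u_k^1,\mathsf T_k(\su^1)}$ and $\bm x$ is the true trajectory. Hence $\overline J_1=J_1(\su^1,\mathsf T(\su^1))$, which is \eqref{eq:leadersobjective}.

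\textbf{Step 3 (minimizers).} The value of $\overline J_1$ depends only on $\su^1$ through $\mathsf T(\su^1)$, even though $(\bm\zeta,\bm\mu)$ may be nonunique (Remark~\ref{rem:nonuniqueness}). Step~1 shows that the set of attainable values of the OCP--LCS objective equals $\{J_1(\su^1,\mathsf T(\su^1)):\su^1\in\Omega_1\}$. Therefore a feasible tuple is a global minimizer if and only if its $\su^1$ minimizes \eqref{eq:leaders_problem}. In that case $(\su^1,\mathsf R(\cdot))=(\su^1,\mathsf T(\su^1))$ is a GOLSE, which gives ``$\supseteq$'' in \eqref{eq:soleq}. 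For ``$\subseteq$'', a GOLSE $(\su^{1\star},\mathsf T(\su^{1\star}))$ lifts by Step~1 to a feasible tuple whose objective equals the infimum, so that tuple lies in $\mathsf S_{\mathrm{OCP\text{-}LCS}}$ and recovers the GOLSE.

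\textbf{Main obstacle.} The delicate point is Step~1's reverse direction. It requires existence of KKT multipliers without a Slater condition, which holds because the constraints are affine. It also requires that every multiplier choice yields the same $\su^2$, so that $\mathsf R$ is well defined on the projection; uniqueness of the follower minimizer ensures this. I would state these two points explicitly.
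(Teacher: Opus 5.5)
Your proposal is correct and follows essentially the same route as the paper. Both arguments use the same chain of steps:
\begin{itemize}
\item a two-way feasibility correspondence via the algebraic reduction of Lemma~\ref{lem:follower}, with $p_k=P_kx_k+\zeta_k$ and $\su^2=\mathsf R(\cdot)$;
\item the identification of \eqref{eq:leadD} with $g_k^1\geq0$;
\item value preservation through $L_kz_k=\col{x_k,u_k^1,u_k^2}$;
\item matching of global minimizers with minimizers of \eqref{eq:leaders_problem}.
\end{itemize}
The paper also relies on the two points you flag as delicate: that KKT multipliers exist because the constraints are affine, and that $\mathsf R$ returns the same $\su^2$ for every admissible choice of $(\bm\zeta,\bm\mu)$.
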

	\begin{proof}
		Let $\mathsf F_{\mathrm{OCP\text{-}LCS}}$ denote the feasible set of
		\eqref{eq:leaderprogram}. For each $\su^1\in\mathsf U_1$,
		Assumption~\ref{assum:regularity} ensures that $\Omega_2(\su^1)$ is a
		nonempty bounded polyhedron. Hence, the follower problem
		\eqref{eq:follower_ocp} attains its minimum. By
		Lemma~\ref{lem:follower}, its objective is strictly convex in $\su^2$,
		and therefore the follower response $\mathsf T(\su^1)$ is unique.
		Moreover, the KKT conditions \eqref{eq:follower_KKT} are necessary and
		sufficient for follower optimality.
		
		\noindent
		\underline{\emph{Feasible OCP--LCS point $\Rightarrow$ feasibility for
				\eqref{eq:leaders_problem}}:}
		Let $(\bm x,\su^1, \bm\zeta, \bm\mu)\in
		\mathsf  F_{\mathrm{OCP\text{-}LCS}}$, and define
		$\su^2:=\mathsf R(\bm x_l,\su^1,\bm\zeta_r,\bm\mu)$ and
		$p_k:=P_kx_k+\zeta_k$ for $k\in\K$. Since $\zeta_K=0$ and
		$P_K=Q_K^2$, we have $p_K=Q_K^2x_K$. Together with
		$x_0=\bar{x}_0$, this gives the boundary conditions in
		\eqref{eq:follower_KKT}. 		
		By the construction in Lemma~\ref{lem:follower},
		\eqref{eq:leadA}--\eqref{eq:leadC}, together with the recovery formula
		for $\su^2$, are equivalent to the state equation, stationarity
		conditions, costate recursion, and follower complementarity conditions
		in \eqref{eq:follower_KKT}. Hence,
		$\{\bm x,\su^1,\su^2,\bm p,\bm\mu\}$ satisfies
		\eqref{eq:follower_KKT}. In particular,
		$\su^2\in\Omega_2(\su^1)$, so $\su^1\in\mathsf U_1$, and KKT
		sufficiency gives $\su^2=\mathsf T(\su^1)$.
		Further, substituting the recovery formula for $\su^2$ into the leader
		constraints shows that \eqref{eq:leadD} is equivalent to
		$g_k^1(x_k,u_k^1,u_k^2)\geq0$ for every $k\in\K_l$. Thus,
		$\su^1\in\Omega_1$. Finally,
		$L_kz_k=\col{x_k,u_k^1,u_k^2}$, and hence
		\begin{align}
			\overline J_1(\bm x,\su^1,\bm\zeta,\bm\mu)
			=
			J_1\big(\su^1,\mathsf T(\su^1)\big).
			\label{eq:valuepreserved}
		\end{align}
		
		\noindent
		\underline{\emph{Feasibility for \eqref{eq:leaders_problem}
				$\Rightarrow$ a feasible OCP--LCS point}:}
		Conversely, let $\su^1\in\Omega_1$, set
		$\su^2:=\mathsf T(\su^1)$, and let
		$\bm x=\bm x(\su^1,\su^2)$ be the associated state trajectory. Since
		$\su^2$ solves the follower problem, there exist $\bm p$ and
		$\bm\mu$ such that
		$\{\bm x,\su^1,\su^2,\bm p,\bm\mu\}$ satisfies
		\eqref{eq:follower_KKT}. Define $\zeta_k:=p_k-P_kx_k$. The terminal
		condition in \eqref{eq:follower_KKT}, together with $P_K=Q_K^2$,
		implies $\zeta_K=0$.
		Applying the transformations in Lemma~\ref{lem:follower} to the KKT
		system yields \eqref{eq:leadA}--\eqref{eq:leadC}. Since
		$\su^1\in\Omega_1$, \eqref{eq:leadD} also holds. Therefore,
		$(\bm x,\su^1,\bm\zeta,\bm\mu)\in
		\mathsf  F_{\mathrm{OCP\text{-}LCS}}$. Moreover,
		$L_kz_k=\col{x_k,u_k^1,u_k^2}$ implies that
		\eqref{eq:valuepreserved} holds for this feasible OCP--LCS point.
		
		\noindent
		\underline{\emph{Equivalence}:} The above two implications show that a leader decision
		\(\su^1\in\Omega_1\) is feasible for \eqref{eq:leaders_problem} if and
		only if it corresponds to a feasible OCP--LCS point. In either case, the
		corresponding objective values agree. Consequently,
		\eqref{eq:leaderprogram} and \eqref{eq:leaders_problem} have the same
		optimal value. Moreover, an OCP--LCS feasible point is a global
		minimizer if and only if its leader strategy minimizes
		$J_1(\su^1,\mathsf T(\su^1))$ over $\Omega_1$, equivalently, if and
		only if $(\su^1,\mathsf T(\su^1))$ is a GOLSE in the sense of
		Definition~\ref{def:gse}. Applying $\mathsf R$ from \eqref{eq:recovery} to the set of global
		OCP--LCS minimizers yields \eqref{eq:soleq}.
	\end{proof}
 
\begin{remark}
	Theorem~\ref{thm:exact} provides an exact reformulation, but does not
	itself guarantee existence of a GOLSE or a global OCP--LCS minimizer.
	Assumption~\ref{assum:regularity} ensures existence of the follower
	response for every $\su^1\in\mathsf U_1$. Existence at the leader level
	additionally requires $\Omega_1\neq\varnothing$ and that
	\eqref{eq:leaders_problem} admit a global minimizer. Moreover, the
	auxiliary variables $(\bm\zeta,\bm\mu)$ associated with a fixed leader
	strategy need not be unique, although each admissible pair recovers
	the unique follower response $\mathsf T(\su^1)$ by
	Remark~\ref{rem:nonuniqueness}. Accordingly, the solution-set relation \eqref{eq:soleq}
	is expressed through the image of
	$\mathsf  S_{\mathrm{OCP\text{-}LCS}}$ under $\mathsf R$, rather than
	through a bijection between the two solution sets.
\end{remark}
\begin{remark}
	\label{rem:nonconvexity}
	The complementarity condition \eqref{eq:leadC} imposes an either-or condition:
	for each $k\in\K_l$ and $i=1,\ldots,c_2$, either $\mu_{k,i}=0$ or
	$\big[E_k^2x_k+S_k^2u_k^1+W_k^2\zeta_{k+1}
	+U_k^2\mu_k+r_k^2\big]_i=0$, with both terms nonnegative. Hence, it defines
	a union of polyhedral active-set cases and makes the OCP--LCS feasible set
	generally nonconvex.
\end{remark}

\subsection{LCQP reformulation}
\label{sec:LCQP}

In this subsection, we reformulate the OCP--LCS as a large-scale
linear complementarity quadratic program (LCQP). For a fixed
initial state $\bar x_0$, we eliminate the state and costate variables
from \eqref{eq:leadA}--\eqref{eq:leadB}, thereby obtaining a problem in
the variables $(\su^1,\bm\mu)$.

Since \eqref{eq:leadB} does not depend on the state variables, it can
be solved backward from $\zeta_K=0$. Using the notation introduced in
the Appendix, this gives
$\zeta_k=\sum_{j=k}^{K-1}\Psi_{k,j}
(\overline F_ju_j^1+\overline G_j\mu_j)$ for $k\in\K_r$. Similarly,
solving \eqref{eq:leadA} forward from $x_0=\bar x_0$ gives
$x_k=\Phi_{k,0}\bar x_0+\sum_{i=0}^{k-1}\Phi_{k,i+1}
(\overline B_iu_i^1+\overline C_i\mu_i+\overline D_i\zeta_{i+1})$
for $k\in\K$. Substituting the costate expression into the state
recursion and stacking the resulting variables yields
\begin{subequations}\label{eq:statecostatevector}
	\begin{align}
		\bm\zeta_r
		&=\bm\Psi\bm F\su^1+\bm\Psi\bm G\bm\mu,
		\label{eq:aggzeta}\\
		\bm x_l
		&=\bm\Phi_0\bar x_0+\bm\Theta_u\su^1+\bm\Theta_\mu\bm\mu,
		\label{eq:aggstate}\\
		x_K
		&=\Phi_{K,0}\bar x_0+\bm\vartheta_u\su^1
		+\bm\vartheta_\mu\bm\mu.
		\label{eq:aggterminal}
	\end{align}
\end{subequations}
For every fixed $(\su^1,\bm\mu,\bar x_0)$,
\eqref{eq:statecostatevector} uniquely characterizes the state and
costate variables satisfying \eqref{eq:leadA}--\eqref{eq:leadB}.

\begin{theorem}[LCQP reformulation]
	\label{th:OLNElcqp}
	Let Assumptions~\ref{assum:regularity} and~\ref{assum:convexity} hold.
	Consider the problem
	\begin{subequations}\label{eq:LCQP}
		\begin{align}
			&\mathrm{LCQP}(\bar x_0):\quad
			\min_{\su^1,\bm\mu}\quad
			\tfrac12\left\|
			\bm\Delta \begin{bmatrix}\su^1\\\bm\mu\end{bmatrix}
			+\bm\xi_0\bar x_0
			\right\|_{\bm{\Sigma}}^2,
			\label{eq:lcqpobj}\\
			\mathrm{s.t.}\quad
			&\bm N^1\su^1+\bm V^1\bm\mu+\bm q^1(\bar x_0)\geq0,
			\label{eq:lcqpcon}\\
			&0\leq\bm\mu\perp
			\bm N^2\su^1+\bm V^2\bm\mu+\bm q^2(\bar x_0)\geq0.
			\label{eq:lcqpLCP}
		\end{align}
	\end{subequations}
	Then $(\su^{1\star},\su^{2\star})$ is a GOLSE of the LQDG
	\eqref{eq:LQDG} if and only if there exists $\bm\mu^\star$ such that
	$(\su^{1\star},\bm\mu^\star)$ is a global minimizer of
	\eqref{eq:LCQP} and
	$\su^{2\star}=
	\mathsf R(\bm x_l,\su^{1\star},\bm\zeta_r,\bm\mu^\star)$, where
	$\bm x_l$ and $\bm\zeta_r$ are recovered from
	\eqref{eq:statecostatevector} using
	$(\su^{1\star},\bm\mu^\star)$.
\end{theorem}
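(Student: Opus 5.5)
The plan is to reduce Theorem~\ref{th:OLNElcqp} to Theorem~\ref{thm:exact}. I will show that eliminating the state and costate through \eqref{eq:statecostatevector} maps the feasible set of the OCP--LCS \eqref{eq:leaderprogram} bijectively onto the feasible set of the LCQP \eqref{eq:LCQP}, with objective values preserved. Global minimizers then correspond, and the GOLSE characterization transfers via the recovery map $\mathsf R$.

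\emph{Step 1 (elimination map).} Define $\pi:(\bm x,\su^1,\bm\zeta,\bm\mu)\mapsto(\su^1,\bm\mu)$ on points satisfying \eqref{eq:leadA}--\eqref{eq:leadB} with $x_0=\bar x_0$ and $\zeta_K=0$. The converse is the lift $\lambda:(\su^1,\bm\mu)\mapsto(\bm x,\su^1,\bm\zeta,\bm\mu)$, with $\bm x_l$, $x_K$, $\bm\zeta_r$ given by \eqref{eq:statecostatevector} and $x_0=\bar x_0$, $\zeta_K=0$. The statement after \eqref{eq:statecostatevector} says these formulas uniquely characterize the solutions of \eqref{eq:leadA}--\eqref{eq:leadB}. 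Hence $\pi$ and $\lambda$ are mutually inverse bijections between the points of the linear-equality part and $\mathbb R^{Km_1}\times\mathbb R^{Kc_2}$. I will verify this briefly:
\begin{itemize}
\item the backward recursion \eqref{eq:leadB} is independent of $\bm x$, so $\bm\zeta$ is determined by $(\su^1,\bm\mu)$;
\item the forward recursion \eqref{eq:leadA} then determines $\bm x$.
\end{itemize}

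\emph{Step 2 (constraints and objective).} Substituting \eqref{eq:aggzeta}--\eqref{eq:aggstate} into the stacked versions of \eqref{eq:leadD} and \eqref{eq:leadC} should give exactly \eqref{eq:lcqpcon} and \eqref{eq:lcqpLCP}. This identifies the Appendix matrices, for example
\[
\bm N^i=\bm E^i\bm\Theta_u+\bm S^i+\bm W^i\bm\Psi\bm F
\]
and the analogous expressions for $\bm V^i$ and $\bm q^i(\bar x_0)$, where $\bm E^i$, $\bm S^i$, $\bm W^i$ denote the block-diagonal stackings of $E_k^i$, $S_k^i$, $W_k^i$. Likewise, stacking $z_k$ and using \eqref{eq:statecostatevector} writes $\col{z_k}_{k\in\K_l}$ and $x_K$ as affine functions of $(\su^1,\bm\mu)$ and $\bar x_0$. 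Then $\overline J_1$ in \eqref{eq:leadersobjective} equals the objective \eqref{eq:lcqpobj}, with $\bm\Sigma=\Bdg\{L_k^\top H_k^1L_k,\,Q_K^1\}$ and $(\bm\Delta,\bm\xi_0)$ collecting the affine coefficients. Thus $\pi$ maps $\mathsf F_{\mathrm{OCP\text{-}LCS}}$ bijectively onto the LCQP feasible set, and $\overline J_1=\mathrm{LCQP}\circ\pi$ on that set. In particular, $\pi(\mathsf S_{\mathrm{OCP\text{-}LCS}})$ is exactly the set of global LCQP minimizers.

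\emph{Step 3 (conclusion).}
\begin{itemize}
\item \emph{($\Rightarrow$)} If $(\su^{1\star},\su^{2\star})$ is a GOLSE, Theorem~\ref{thm:exact}, via \eqref{eq:soleq}, gives $(\bm x,\su^{1\star},\bm\zeta,\bm\mu^\star)\in\mathsf S_{\mathrm{OCP\text{-}LCS}}$ with $\su^{2\star}=\mathsf R(\cdot)$. Its image under $\pi$ is a global LCQP minimizer, and the state and costate are those recovered from \eqref{eq:statecostatevector} by uniqueness.
\item \emph{($\Leftarrow$)} If $(\su^{1\star},\bm\mu^\star)$ is a global LCQP minimizer, its lift $\lambda(\su^{1\star},\bm\mu^\star)$ lies in $\mathsf S_{\mathrm{OCP\text{-}LCS}}$. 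Applying $\mathsf R$ and Theorem~\ref{thm:exact} yields the GOLSE.
\end{itemize}

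The main obstacle is Step 2: the bookkeeping that identifies the stacked matrices consistently with the Appendix definitions. Particular care is needed with the $\overline D_i\zeta_{i+1}$ terms, which feed $\bm\Psi$ into $\bm\Theta_u$ and $\bm\Theta_\mu$, and with the index shift between $\bm\zeta_r$ and $\zeta_{k+1}$. Conceptually, nothing beyond Theorem~\ref{thm:exact} and the uniqueness of the elimination is needed.
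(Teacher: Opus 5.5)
Your proposal is correct and follows the paper's proof: you eliminate $(\bm x,\bm\zeta)$ via \eqref{eq:statecostatevector}, observe that for fixed $\bar x_0$ this gives a value-preserving bijection between the OCP--LCS and LCQP feasible sets, and transfer the GOLSE characterization from Theorem~\ref{thm:exact}. One bookkeeping correction: the Appendix's $\bm\Sigma=\Bdg(Q_K^1,\bm Q^1,\bm R^{11},\bm R^{12})$ weights the stacked recovered vector $\col{x_K,\bm x_l,\su^1,\bm\Lambda_u\su^1+\bm\Lambda_\mu\bm\mu+\bm\Lambda_0\bar x_0}$ (that is, it stacks $L_kz_k$). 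It is not $\Bdg\{L_k^\top H_k^1L_k,Q_K^1\}$ acting on $z_k$, although both factorizations give the same objective value.
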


\begin{proof}
	Substituting \eqref{eq:statecostatevector} into
	\eqref{eq:leadC} and \eqref{eq:leadD} yields
	\eqref{eq:lcqpLCP} and \eqref{eq:lcqpcon}, respectively, with
	$\bm N^i$, $\bm V^i$, and $\bm q^i(\bar x_0)$ as defined in the
	Appendix. Moreover, substituting
	\eqref{eq:statecostatevector} into \eqref{eq:leadersobjective} yields
	\eqref{eq:lcqpobj}, since $L_kz_k$ stacks to
	$\col{\bm x_l,\su^1,
		\bm\Lambda_u\su^1+\bm\Lambda_\mu\bm\mu+\bm\Lambda_0\bar x_0}$ and
	$\bm{\Sigma }$ collects $Q_K^1$ and
	$\{H_k^1\}_{k\in\K_l}$ in the corresponding block order.
	Therefore, for fixed $\bar x_0$, feasible points of
	\eqref{eq:leaderprogram} and \eqref{eq:LCQP} are in bijection and
	their objective values agree. The result follows from
	Theorem~\ref{thm:exact}.
\end{proof}
	\begin{remark}
		\label{rem:qpcc}
		Problem \eqref{eq:LCQP} is parameterized by
		$\bar x_0\in\mathbb R^n$. Let
		$\mathsf  S_{\mathrm{LCQP}}(\bar x_0)$ denote its set of global
		minimizers. By Theorem~\ref{th:OLNElcqp}, a GOLSE exists for precisely
		those initial conditions in
		$\{\bar x_0\in\mathbb R^n\mid
		\mathsf  S_{\mathrm{LCQP}}(\bar x_0)\neq\emptyset\}$. Multiple LCQP
		minimizers need not yield distinct GOLSEs, since distinct multiplier
		vectors $\bm\mu$ may correspond to the same leader strategy and the
		same unique follower response; see Remark~\ref{rem:nonuniqueness}.
		\end{remark} 

	\begin{remark}
		When \(\bm{\Sigma}\succeq0\), the LCQP objective is convex and
		\eqref{eq:lcqpcon} is affine. Eliminating the state and costate trajectories
		introduces no additional nonconvexity; thus, the complementarity constraint
		\eqref{eq:lcqpLCP}, as noted in Remark~\ref{rem:nonconvexity}, is the only
		nonconvex component of the LCQP.
	\end{remark}

\begin{remark}
		For $k\in\K_l$, define the complementarity slack
	$s_k:=E_k^2x_k+S_k^2u_k^1+W_k^2\zeta_{k+1}+U_k^2\mu_k+r_k^2$.
	Given valid finite
	bounds \(\overline{\mu}_{k,i}\) and \(\overline{s}_{k,i}\),
	\(0\leq\mu_{k,i}\perp s_{k,i}\geq0\) is equivalently represented by
	\begin{align*}
		0\leq\mu_{k,i}\leq\overline{\mu}_{k,i}b_{k,i},\quad
		0\leq s_{k,i}\leq\overline{s}_{k,i}(1-b_{k,i}),\quad
		b_{k,i}\in\{0,1\},
	\end{align*}
	for \(i=1,\ldots,c_2\). Replacing the complementarity relations in the
	OCP--LCS or LCQP by these constraints yields an exact mixed integer quadratic program (MIQP), since the remaining constraints are affine and the objective is quadratic. 
	Such an MIQP can be solved to
	global optimality by standard branch-and-bound solvers, such as \texttt{Gurobi} \cite{gurobi}, when the optimality gap is certified. 
	MIQP formulations for OCP--LCS have been considered in \cite{Ayd23}, and
	LCQP solution methods in \cite{Hal22}.	
\end{remark}

	\section{Numerical Illustration}
	\label{sec:numerical}
\begin{figure}[h]
	\centering
     \begin{tikzpicture}[scale=0.475,transform shape,
 	>={Stealth[length=2.2mm]},
		nd1/.style  ={circle,draw=pOne,thick,minimum size=5mm,font=\Large ,
			fill=pOne!10,text=pOne!70!black},
		nd2/.style  ={circle,draw=pTwo,thick,minimum size=5mm,font=\Large,
			fill=pTwo!10,text=pTwo!70!black},
		relA/.style ={rectangle,draw=shared, thick,minimum size=9mm,
			rounded corners=0.5pt,font=\Large,fill=gray!12},
		relB/.style ={rectangle,draw=shared, thick,densely dashed,minimum size=9mm,
			rounded corners=0.5pt,font=\Large,fill=gray!12},
		viaA/.style ={->,line width=.75pt},                       
		viaB/.style ={->,line width=.75pt,densely dashed},        
		lb1/.style  ={font=\Large,text=pOne,inner sep=1.2pt,fill=white,
			fill opacity=0.9,text opacity=1,rounded corners=0.5pt},
		lb2/.style  ={font=\Large,text=pTwo,inner sep=1.2pt,fill=white,
			fill opacity=0.9,text opacity=1,rounded corners=0.5pt},
		tagS/.style ={font=\Large,text=shared,inner sep=1.6pt,
			draw=shared!55,fill=shared!8,rounded corners=1pt},
		tag1/.style ={font=\Large,text=pOne,inner sep=1.6pt,
			draw=pOne!55,fill=pOne!6,rounded corners=1pt},
		tag2/.style ={font=\Large,text=pTwo,inner sep=1.6pt,
			draw=pTwo!55,fill=pTwo!6,rounded corners=1pt},
		]
		
		\node[nd1]  (S1) at (0,1.9)    {$S_1$};
		\node[font=\Large,text=pOne] at ($(S1)+(0,0.92)$) {player 1};
		\node[nd2]  (S2) at (0,-1.9)   {$S_2$};
		\node[font=\Large,text=pTwo] at ($(S2)+(0,-0.92)$) {player 2};
		\node[relA] (N1) at (4.6,1.05)  {$N_1$};
		\node[relB] (N2) at (4.6,-1.05) {$N_2$};
		\node[nd1]  (T1) at (9.2,1.9)  {$T_1$};
		\node[nd2]  (T2) at (9.2,-1.9) {$T_2$};

		\begin{scope}[band/.style={line width=4pt,line cap=round}]
			\draw[band,shared!50] ($(S2)!0.75!(N1)$) -- ($(S1)!0.75!(N1)$);
			\node[font=\Large,text=shared] at ($(S1)!0.75!(N1)+(0,0.34)$) {$c^1$};
			\draw[band,shared!50] ($(S1)!0.75!(N2)$) -- ($(S2)!0.75!(N2)$);
			\node[font=\Large,text=shared] at ($(S2)!0.75!(N2)+(0,-0.34)$) {$c^2$};
			\draw[band,pOne!40] ($(N2)!0.75!(T1)$) -- ($(N1)!0.75!(T1)$);
			\node[font=\Large,text=pOne] at ($(N1)!0.75!(T1)+(0,0.34)$) {$\tilde c^{\,1}$};
			\draw[band,pTwo!40] ($(N1)!0.75!(T2)$) -- ($(N2)!0.75!(T2)$);
			\node[font=\Large,text=pTwo] at ($(N2)!0.75!(T2)+(0,-0.34)$) {$\tilde c^{\,2}$};
		\end{scope}
		
		\node[font=\Large,text=shared] at ($(N1)+(0,0.95)$) {$x^1_k$};
		\node[font=\Large,text=shared] at ($(N2)+(0,-0.95)$) {$x^2_k$};
		
		\draw[viaA,pOne] (S1) -- node[lb1,pos=0.42] {$v^{11}_k$} (N1);
		\draw[viaA,pTwo] (S2) -- node[lb2,pos=0.28] {$v^{21}_k$} (N1);
		\draw[viaA,pOne] (N1) -- (T1);
		\draw[viaA,pTwo] (N1) -- (T2);
		\draw[viaB,pOne] (S1) -- node[lb1,pos=0.28] {$v^{12}_k$} (N2);
		\draw[viaB,pTwo] (S2) -- node[lb2,pos=0.42] {$v^{22}_k$} (N2);
		\draw[viaB,pOne] (N2) -- (T1);
		\draw[viaB,pTwo] (N2) -- (T2);
		 
	\end{tikzpicture} 
    \caption{Network flow game with two players and two relay nodes.} 
    \label{fig:relay}
 \end{figure} 
 
We revisit the relay-network flow-control game in \cite{Moh23}. We compute
the GOLSE   and compare it with the generalized open-loop Nash equilibrium (GOLNE)
for the same network \cite{Moh23}. Thus, the two
equilibrium concepts differ only in the order of strategy selection. The
network has two sources \((S_1,S_2)\), two relays \((N_1,N_2)\), and two
destinations \((T_1,T_2)\); see Fig.~\ref{fig:relay}. At stage \(k\), player
\(i\in\{1,2\}\) routes flow \(v_k^{il}\) through relay \(N_l\), $l\in \{1,2\}$, whose state
evolves as \(x_{k+1}^l=x_k^l-\delta^l(v_k^{1l}+v_k^{2l})\), where
\(\delta^l>0\) is the depletion factor. Each player minimizes the negative
discounted payoff \(J^i\), given by
\begin{multline*}
	J^{i}=-\beta^K\sum_{l=1}^{2}
	\big(D^{il}x_K^{l}-\tfrac{1}{2}S^{il}(x_K^{l})^2\big)\\
	-\sum_{k=0}^{K-1}\sum_{l=1}^{2}\beta^k
	\Big(d^{il}x_k^{l}-\tfrac{1}{2}s^{il}(x_k^{l})^2
	+w^{il}v_k^{il}-\tfrac{1}{2}t^{il}(v_k^{il})^2\Big).
\end{multline*}
subject to shared relay-capacity and battery constraints
\(v_k^{1l}+v_k^{2l}\leq c^l\) and \(x_{k+1}^l\geq b_{\min}^l\), destination
constraints \(v_k^{i1}+v_k^{i2}\leq\tilde c^i\), and flow bounds
\(0\leq v_k^{il}\leq\bar v^{il}:=w^{il}/t^{il}\). We use
\(x_0^l=10\), \(b_{\min}^l=1\), \(D^{il}=S^{il}=8\), \(d^{il}=s^{il}=6\),
\(w^{il}=30\), \(t^{il}=10\), \(\delta^l=0.1\), \(c^l=5\), and
\(\tilde c^i=4\), for \(i,l\in\{1,2\}\), with \(K=30\) and \(\beta=0.95\).
The big-\(M\) MIQP reformulation of \eqref{eq:LCQP} was solved using
\texttt{Gurobi} with \(M=1000\) and \texttt{MIPGap}=\(10^{-8}\),
requiring \(1\)~s on an Intel Core i5-12500 (6-core) desktop with 32GB RAM.

 For \(e\in\{\mathrm N,\mathrm S\}\), where \(\mathrm N\) and \(\mathrm S\) denote the Nash and Stackelberg equilibria, respectively, let \(F_i^e:=\sum_{k=0}^{K-1}\sum_{l=1}^{2}v_{k,e}^{il}\) denote player \(i\)'s total delivered flow, \(F^e:=F_1^e+F_2^e\) the total delivered flow, and \(\rho_i^e:=F_i^e/F^e\) its flow share.  
 We further define player \(i\)'s allocation share at relay \(N_l\) as \(\alpha_{i,l}^e:=\sum_{k=0}^{K-1}v_{k,e}^{il}/\sum_{k=0}^{K-1}(v_{k,e}^{1l}+v_{k,e}^{2l})\), and the fraction of player \(i\)'s own traffic routed through \(N_l\) as \(\theta_{i,l}^e:=\sum_{k=0}^{K-1}v_{k,e}^{il}/F_i^e\), for \(i,l\in\{1,2\}\). 
 We quantify the effect of first-mover commitment through the leader's cost reduction \(\Delta_{\mathrm L}:=J^1_{\mathrm N}-J^1_{\mathrm S}\) and the follower's cost increase \(\Delta_{\mathrm F}:=J^2_{\mathrm S}-J^2_{\mathrm N}\).
 
We vary the initial charge of relay \(N_1\) over
\(x_0^1\in\{2,\ldots,18\}\), fixing \(x_0^2=10\) and all other parameters
at their baseline values. Thus, the initial relay charges are the only
source of asymmetry. The battery-supported flow ceiling,
\(S(x_0^1)=\sum_{l=1}^2\frac{x_0^l-b_{\min}^l}{\delta^l}
=10x_0^1+80\), reaches the aggregate destination-flow limit
\(K(\tilde{c}^1+\tilde{c}^2)=240\) at \(x_0^1=16\).

For low \(x_0^1\), \(N_1\) is scarce, and announcement enables the leader to
preempt its use. The flow profiles in Fig.~\ref{fig:Flow310} show that the
leader commits traffic to \(N_1\), inducing the follower to route more
traffic through \(N_2\). Accordingly,
\(\theta_{1,1}^{\mathrm S}>\theta_{1,1}^{\mathrm N}\) and
\(\theta_{2,1}^{\mathrm S}<\theta_{2,1}^{\mathrm N}\); see
Fig.~\ref{fig:Flowshare3}. The leader consequently obtains both a larger
aggregate flow share, \(\rho_1^{\mathrm S}>\rho_2^{\mathrm S}\), and a
larger share of \(N_1\) traffic,
\(\alpha_{1,1}^{\mathrm S}>\alpha_{2,1}^{\mathrm S}\); see
Figs.~\ref{fig:Flowshare1} and~\ref{fig:Flowshare2}. At \(x_0^1=3\),
player~1 receives approximately \(74.5\%\) of total flow and carries all
traffic through \(N_1\). Under Nash, in contrast, the own-route shares
remain symmetric, \(\theta_{1,l}^{\mathrm N}=\theta_{2,l}^{\mathrm N}\),
for \(l\in\{1,2\}\), although they vary with \(x_0^1\); see
Fig.~\ref{fig:Flowshare3}.

At the equal-charge point \(x_0^1=10\), each player splits its own
Stackelberg traffic evenly,
\(\theta_{i,1}^{\mathrm S}=\theta_{i,2}^{\mathrm S}=0.5\); see
Figs.~\ref{fig:Flow1010} and~\ref{fig:Flowshare3}. Hence,
\(\alpha_{i,1}^{\mathrm S}=\alpha_{i,2}^{\mathrm S}=\rho_i^{\mathrm S}\).
Nevertheless, the Stackelberg aggregate flow shares remain unequal:
\(\rho_1^{\mathrm S}\approx0.64\), whereas the Nash outcome has
\(\rho_1^{\mathrm N}=\rho_2^{\mathrm N}=0.5\); see
Figs.~\ref{fig:Flowshare1} and~\ref{fig:Flowshare2}. Thus, equal relay
charges eliminate within-player route preference, but not the aggregate
advantage of moving first.

As \(x_0^1\) increases further, the \(N_1\) allocation shares cross near
\(x_0^1=13\): player~2 becomes the primary user of \(N_1\), whereas
player~1 carries more \(N_2\) traffic; see Fig.~\ref{fig:Flowshare2}.
Stackelberg routing remains asymmetric, unlike the symmetric Nash routing:
the leader remains close to an equal relay split, while the follower
increasingly routes through \(N_1\); see
Fig.~\ref{fig:Flowshare3}. Thus, the crossover changes the relay-specific
form of the leader's advantage, but not its aggregate advantage.

Once the flow ceiling is reached at \(x_0^1=16\), the Stackelberg shares
become equal for \(x_0^1\geq16\):
\(\rho_1^{\mathrm S}=\rho_2^{\mathrm S}=0.5\); see
Fig.~\ref{fig:Flowshare1}. Routing nevertheless remains asymmetric:
\(\theta_{1,1}^{\mathrm S}=0.5\),
\(\theta_{2,1}^{\mathrm S}=0.75\), implying
\(\alpha_{1,1}^{\mathrm S}=0.4\) and
\(\alpha_{2,1}^{\mathrm S}=0.6\); see
Figs.~\ref{fig:Flowshare2} and~\ref{fig:Flowshare3}. The leader's cost
decreases and the follower's cost increases throughout the considered
range, approaching \(\Delta_{\mathrm L}\approx39.3\) and
\(\Delta_{\mathrm F}\approx117.8\), respectively; see
Fig.~\ref{fig:Costdiff}. Thus, even when aggregate flow shares become
equal, commitment continues to affect relay allocation, temporal routing,
and costs.
\begin{figure}[t]
	\centering
    \captionsetup[subfloat]{captionskip=-2pt}
	\subfloat[\scriptsize{Flow profiles at \((x_0^1,x_0^2)=(3,10)\).}]{ 
\begin{tikzpicture}[scale=0.565,transform shape]
	\begin{axis}[
		width=0.975\columnwidth,height=.6\columnwidth,
		grid=both,grid style={gray!20},
		xlabel style={font=\normalsize, yshift=5pt},
		xlabel=$k$,
		tick label style={font=\normalsize},label style={font=\normalsize},
		legend style={at={(axis cs:12,1)},font=\Large,draw=none,fill=none}, 
		ymin=0,ymax=3.25,xmin=0,xmax=29,]
    	\addplot[black,thick, line width = 1.5pt, draw opacity=0.9]table[x=stage,y=nash_v11,col sep=comma]
    	{relay_test_analysis/T4_flows_v3.csv};
    	
        \addplot[black,thick, dashed,  line width = 1.5pt, draw opacity=0.9]table[x=stage,y=nash_v12,col sep=comma]
    	{relay_test_analysis/T4_flows_v3.csv};
    	
    	\addplot[pOne,  line width = 1.5pt, draw opacity=0.9]table[x=stage,y=stack_v11,col sep=comma] {relay_test_analysis/T4_flows_v3.csv};
    	
    	\addplot[pOne,dashed, line width = 1.5pt,draw opacity=0.9]table[x=stage,y=stack_v12,col sep=comma]
    	{relay_test_analysis/T4_flows_v3.csv};
    	
    	\addplot[pTwo, line width = 2pt,draw opacity=0.9]table[x=stage,y=stack_v21,col sep=comma]
    	{relay_test_analysis/T4_flows_v3.csv};
    	
    	\addplot[pTwo,dashed, line width = 1.5pt,draw opacity=0.9]table[x=stage,y=stack_v22,col sep=comma]
    	{relay_test_analysis/T4_flows_v3.csv};
        \node[font=\normalsize, anchor=west, text=pOne] at (axis cs:3,1.75) {$v^{11}_{k,\mathrm{S}}$};
        \node[font=\normalsize, anchor=west, text=pOne] at (axis cs:12,3) {$v^{12}_{k,\mathrm{S}}$};
        \node[font=\normalsize, anchor=west, text=pTwo] at (axis cs:1,0.25) {$v^{21}_{k,\mathrm{S}}$};
        \node[font=\normalsize, anchor=west, text=pTwo] at (axis cs:10,1.5) {$v^{22}_{k,\mathrm{S}}$};
        \node[font=\normalsize, anchor=west, text=black] at (axis cs:16,1.5) {$v^{i2}_{k,\mathrm{N}}$};
        \node[font=\normalsize, anchor=west, text=black] at (axis cs:6.2,0.8) {$v^{i1}_{k,\mathrm{N}}$};
	\end{axis}
\end{tikzpicture} \label{fig:Flow310}}
	\subfloat[\scriptsize{Flow profiles at \((x_0^1,x_0^2)=(10,10)\).}]{\begin{tikzpicture}[scale=0.565,transform shape]
	\begin{axis}[
		width=0.975\columnwidth,height=.6\columnwidth,
		grid=both,grid style={gray!20},
		xlabel style={font=\normalsize, yshift=5pt},
		xlabel=$k$,
		tick label style={font=\normalsize},label style={font=\normalsize},
		legend style={at={(axis cs:12,1)},font=\Large,draw=none,fill=none}, 
		ymin=0,ymax=3.25,xmin=0,xmax=29,]
		\addplot[black, line width = 1.5pt, draw opacity=0.9] table[x=stage,y=nash_v11,col sep=comma]
		{relay_test_analysis/T4_flows_v10.csv};
		
		\addplot[black, dashed,  line width = 1.5pt, draw opacity=0.9]table[x=stage,y=nash_v12,col sep=comma]
		{relay_test_analysis/T4_flows_v10.csv};
		
		\addplot[pOne,  line width = 1.5pt, draw opacity=0.9]table[x=stage,y=stack_v11,col sep=comma]
		{relay_test_analysis/T4_flows_v10.csv};
		
		\addplot[pOne,dashed,  line width = 1.5pt, draw opacity=0.9]table[x=stage,y=stack_v12,col sep=comma]
		{relay_test_analysis/T4_flows_v10.csv};
		
		\addplot[pTwo, line width = 1.5pt, draw opacity=0.9]table[x=stage,y=stack_v21,col sep=comma]
		{relay_test_analysis/T4_flows_v10.csv};
		
		\addplot[pTwo,dashed, line width = 1.5pt, draw opacity=0.9]table[x=stage,y=stack_v22,col sep=comma]
		{relay_test_analysis/T4_flows_v10.csv};
		\node[font=\normalsize, anchor=west, text=pOne] at (axis cs:15,2.25) {$v^{11}_{k,\mathrm{S}}=v^{12}_{k,\mathrm{S}}$};
		\node[font=\normalsize, anchor=west, text=black] at (axis cs:20,1.4) {$v^{il}_{k,\mathrm{N}}$};
		\node[font=\normalsize, anchor=west, text=pTwo] at (axis cs:8,1) {$v^{21}_{k,\mathrm{S}}=v^{22}_{k,\mathrm{S}}$}; 
	\end{axis}
\end{tikzpicture} \label{fig:Flow1010}}\\
	\subfloat[\scriptsize{Total flow shares.}]{\begin{tikzpicture}[scale=0.5425,transform shape]
	\begin{axis}[width=.975\columnwidth,height=.6\columnwidth,
		xmin=2,	xmax=18, ymin=0,ymax=1.05,
		grid=both, 	grid style={gray!20},
		xlabel={$x_0^1$},
		xlabel style={font=\normalsize,xshift=-12pt,yshift=10pt},
		xlabel style={font=\normalsize,yshift=3pt},
		tick label style={font=\normalsize}, 
		every axis plot/.append style={line width=0.5pt,mark size=1.25pt}]
		
 		\addplot+[black, line width=1pt, dashed,mark=none]coordinates {(10,0) (10,1.05)};
 		\addplot+[black, line width=1pt, dashed,mark=none]coordinates {(16,0) (16,1.05)};
 
 		\addplot+[black,densely dashed,mark=none]table[x=x0_relay1,y=share1_nash,col sep=comma]
		{relay_test_analysis/T4_summary.csv};
		
		\addplot+[pOne,solid,mark=*]table[x=x0_relay1,y=share1_stack,col sep=comma]
		{relay_test_analysis/T4_summary.csv};
		
		\addplot+[pTwo,solid,mark=square*,mark options={fill=pTwo,draw=pTwo}]
		table[x=x0_relay1,y=share2_stack,col sep=comma]
		{relay_test_analysis/T4_summary.csv}; 
    	\node[font=\small, anchor=west, text=pOne] at (axis cs:3,0.85) {$\rho_1^{\mathrm{S}}$}; 
  		\node[font=\small, anchor=west, text=pTwo] at (axis cs:3,0.15) {$\rho_2^{\mathrm{S}}$}; 
    	\node[font=\small, anchor=west, text=black] at (axis cs:2.25,0.55) {GOLNE};
    	\node[font=\small, anchor=west, text=black] at (axis cs:10,0.75) {baseline}; 
    	\node[font=\small, anchor=east, text=black] at (axis cs:16,0.9) {flow ceiling}; 
   	
	\end{axis}
\end{tikzpicture}\label{fig:Flowshare1}}
	\subfloat[\scriptsize{Relay traffic shares.}]{\begin{tikzpicture}[scale=0.5425,transform shape]
	\begin{axis}[width=.975\columnwidth,height=.6\columnwidth,
		xmin=2,	xmax=18, ymin=-0.1,ymax=1.1,
		grid=both, 	grid style={gray!20},
		xlabel={$x_0^1$},
		xlabel style={font=\normalsize,xshift=-12pt,yshift=10pt},
		xlabel style={font=\normalsize,yshift=3pt},
		tick label style={font=\normalsize}, 
		every axis plot/.append style={line width=0.5pt,mark size=1.25pt}]
		
    	\addplot+[black,dashed ,line width =1pt, mark=none]coordinates {(10,-0.1) (10,1.1)};
 		\addplot+[black,dashed,line width=1pt, mark=none]coordinates {(13,-0.1) (13,1.1)};
 		\addplot+[black, line width=1pt, dashed,mark=none]coordinates {(16,-0.1) (16,1.1)};
 
 		\addplot+[black,densely dashed,mark=none]table[x=x0_relay1,y=capture1_N1_nash,col sep=comma]
		{relay_test_analysis/T4_summary.csv}; 
 
 		\addplot+[pOne,mark=*,mark options={fill=pOne,draw=pOne}]
		table[x=x0_relay1,y=capture1_N1_stack,col sep=comma]
		{relay_test_analysis/T4_summary.csv};
		\addplot+[pTwo, mark=square*,	mark options={fill=pTwo,draw=pTwo}]
		table[x=x0_relay1,y=capture2_N1_stack,col sep=comma]
		{relay_test_analysis/T4_summary.csv};

		\addplot+[pOne,dashed,mark=*,mark options={fill=pOne,draw=pOne}]
		table[x=x0_relay1,y=capture1_N2_stack,col sep=comma]
		{relay_test_analysis/T4_summary.csv};

 		\addplot+[pTwo,dashed,mark=square*,	mark options={fill=pTwo,draw=pTwo}]
 		table[x=x0_relay1,y=capture2_N2_stack,col sep=comma]
 		{relay_test_analysis/T4_summary.csv};

    	\node[font=\normalsize, anchor=west, text=pOne] at (axis cs:5,0.975) {$\alpha_{1,1}^{\mathrm{S}}$}; 
    	\node[font=\normalsize, anchor=west, text=pTwo] at (axis cs:5,0.025) {$\alpha_{2,1}^{\mathrm{S}}$}; 
    	
    	\node[font=\normalsize, anchor=west, text=pOne] at (axis cs:2,0.8) {$\alpha_{1,2}^{\mathrm{S}}$}; 
        \node[font=\normalsize, anchor=west, text=pTwo] at (axis cs:2,0.2) {$\alpha_{2,2}^{\mathrm{S}}$}; 
    	\node[font=\normalsize, anchor=west, text=black] at (axis cs:2.25,0.55) {GOLNE}; 
	\end{axis}
\end{tikzpicture}\label{fig:Flowshare2}}\\
	\subfloat[\scriptsize{Own traffic share.}]{\begin{tikzpicture}[scale=0.5425,transform shape]
	\begin{axis}[width=.975\columnwidth,height=.6\columnwidth,
		xmin=2,	xmax=18, ymin=0,ymax=1.05,
		grid=both, 	grid style={gray!20},
		xlabel={$x_0^1$},
		xlabel style={font=\normalsize,xshift=-12pt,yshift=10pt},
		xlabel style={font=\normalsize,yshift=3pt},
		tick label style={font=\normalsize}, 
		every axis plot/.append style={line width=.5pt,mark size=1.25pt}]
		
 		\addplot+[black,dashed,line width=1pt,mark=none]coordinates {(10,0) (10,1.05)};
  		\addplot+[black, line width=1pt, dashed,mark=none]coordinates {(16,0) (16,1.05)};
 		\addplot+[black,mark=diamond*,	mark options={fill=black,draw=black}]table[x=x0_relay1,y=route1_N1_nash,col sep=comma]
		{relay_test_analysis/T4_summary.csv};
 		\addplot+[black,mark=diamond*, dashed,	mark options={fill=black,draw=black}]table[x=x0_relay1,y=route1_N2_nash,col sep=comma]
 		{relay_test_analysis/T4_summary.csv}; 
 		\addplot+[pOne,mark=*,mark options={fill=pOne,draw=pOne}]
		table[x=x0_relay1,y=route1_N1_stack,col sep=comma]
		{relay_test_analysis/T4_summary.csv};
		\addplot+[pTwo, mark=square*,	mark options={fill=pTwo,draw=pTwo}]
		table[x=x0_relay1,y=route2_N1_stack,col sep=comma]
		{relay_test_analysis/T4_summary.csv}; 
		\addplot+[pOne,mark=*,dashed, mark options={fill=pOne,draw=pOne}]
		table[x=x0_relay1,y=route1_N2_stack,col sep=comma]
		{relay_test_analysis/T4_summary.csv};
		\addplot+[pTwo, mark=square*,dashed,mark options={fill=pTwo,draw=pTwo}]
		table[x=x0_relay1,y=route2_N2_stack,col sep=comma]
		{relay_test_analysis/T4_summary.csv};

  		\node[font=\normalsize, anchor=west, text=Black] at (axis cs:3.1,0.15) {$\theta_{i,1}^{\mathrm{N}}$}; 
  		\node[font=\normalsize, anchor=west, text=Black] at (axis cs:3.1,0.85) {$\theta_{i,2}^{\mathrm{N}}$};   		
    	\node[font=\normalsize, anchor=west, text=pOne] at (axis cs:2,0.35) {$\theta_{1,1}^{\mathrm{S}}$}; 
    	\node[font=\normalsize, anchor=west, text=pOne] at (axis cs:2,0.625) {$\theta_{1,2}^{\mathrm{S}}$}; 
  		\node[font=\normalsize, anchor=west, text=pTwo] at (axis cs:6,0.1) {$\theta_{2,1}^{\mathrm{S}}$}; 
 		\node[font=\normalsize, anchor=west, text=pTwo] at (axis cs:6,0.9) {$\theta_{2,2}^{\mathrm{S}}$};  
   	
	\end{axis}
\end{tikzpicture}\label{fig:Flowshare3}}	
	\subfloat[\scriptsize{Cost differences.}]{\begin{tikzpicture}[scale=0.5425,transform shape]
	\begin{axis}[width=.975\columnwidth,height=.6\columnwidth,
	xmin=2,	xmax=18, ymin=0,ymax=450,
	grid=both, 	grid style={gray!20},
	xlabel={$x_0^1$},
	xlabel style={font=\normalsize,xshift=-12pt,yshift=10pt},
	xlabel style={font=\normalsize,yshift=3pt},
	tick label style={font=\normalsize}, 
	every axis plot/.append style={line width=0.5pt,mark size=1.25pt}]
		
		\addplot+[black,dashed,line width=1pt,mark=none]coordinates {(10,0) (10,450)};
		\addplot+[black,dashed,line width=1pt,mark=none]coordinates {(16,0) (16,450)};
		\addplot+[pOne,mark=*,,mark options={fill=pOne,draw=pOne},draw opacity=1]
		table[x=x0_relay1,y=leader_advantage,col sep=comma]
		{relay_test_analysis/T4_summary.csv};
		
		\addplot+[pTwo,mark=square*,mark options={fill=pTwo,draw=pTwo},draw opacity=1]
		table[x=x0_relay1,y=follower_loss,col sep=comma]
		{relay_test_analysis/T4_summary.csv};
		
		\node[font=\normalsize, anchor=west, text=pOne] at (axis cs:4,200) {$\Delta_L$}; 
		\node[font=\normalsize, anchor=west, text=pTwo] at (axis cs:5,340) {$\Delta_F$}; 
		\node[font=\normalsize, anchor=west, text=black] at (axis cs:10,320) {baseline}; 
		\node[font=\normalsize, anchor=east, text=black] at (axis cs:16,400) {flow ceiling}; 
	\end{axis}
\end{tikzpicture}\label{fig:Costdiff}}
	\caption{Strategic routing under asymmetric initial relay charges.}	
	\label{fig:T4}
\end{figure} 
 
\section{Conclusion}\label{sec:conclusion}
We studied finite-horizon linear--quadratic Stackelberg difference games
with coupled-affine state-control constraints. Using a Riccati-based
costate transformation, we derived an exact OCP--LCS reformulation and a
large-scale LCQP parametrized by the initial state. Global minimizers of this LCQP yield generalized open-loop Stackelberg
equilibria, with the follower strategy obtained from an explicit affine
recovery map. A natural direction for future work is to
extend the results to a feedback information structure.
	\appendix 
	\subsubsection*{Notations for Section \ref{sec:LCQP}}
We first define the state transition matrices associated with the forward and backward equations \eqref{eq:leadA}--\eqref{eq:leadB} and then define the block matrices as
	\begin{align*}
		&\Psi_{k,j}:=\overline A_k^\top\overline A_{k+1}^\top\cdots\overline A_{j-1}^\top,
		~k\leq j,~\Psi_{k,k}=\I_n,~k,j\in \K_l, \notag\\
		&\Phi_{k,i}:=\overline A_{k-1}\overline A_{k-2}\cdots\overline A_i,
		~k\geq i,~\Phi_{k,k}=\I_n,~i,k\in \K, \notag\\
&\bm \Phi_0=\col{\Phi_{k,0}}_{k\in \K_l},\,\bm B=\Bdg\{\overline B_k\}_{k\in \K_l},\, \bm C=\Bdg\{\overline C_k\}_{k\in \K_l},\\
&\bm D=\Bdg\{\overline D_k\}_{k\in \K_l},\,
\bm F=\Bdg\{\overline F_k\}_{k\in \K_l},\,
\bm G=\Bdg\{\overline G_k\}_{k\in \K_l}.\end{align*} 
	We define the following matrices, with block indices $i,j\in \K_r$,
	\begin{align*}
		&[\bm \Psi]_{ij}:=\begin{cases} \Z_{n\times n},&~i\geq j\\ \Psi_{i,j-1},&~i<j\end{cases},\qquad
		[\bm \Phi]_{ij}:=\begin{cases} \Phi_{i-1,j},&~i>j\\ \Z_{n\times n},&~i\leq j\end{cases},
		\\
		& \text{for $i\in \N$},~ \bm E^i= \Bdg\{E^i_k\}_{k\in \K_l},~
		\bm S^i=\Bdg\{S^i_k\}_{k\in \K_l},\\
		&\bm W^i=\Bdg\{W^i_k\}_{k\in \K_l},~
		\bm U^i=\Bdg\{U^i_k\}_{k\in \K_l},~\bm r^i=\col{r^i_k}_{k\in \K_l},\\
		&\bm T^i:=\bm E^i\bm \Phi \bm D+\bm W^i,~\bm q^i(\bar x_0):=\bm E^i\bm \Phi_0\bar x_0+\bm r^i,\\
		&\bm N^i:=\bm E^i\bm \Phi \bm B+\bm S^i+\bm T^i\bm \Psi \bm F,~
		\bm V^i:=\bm E^i\bm \Phi \bm C+\bm U^i+\bm T^i\bm \Psi \bm G,\\
		&\bm \Theta_u:=\bm \Phi \bm B+\bm \Phi \bm D\bm \Psi \bm F,~\bm \Theta_\mu:=\bm \Phi \bm C+\bm \Phi \bm D\bm \Psi \bm G,\\
		&\bm \Phi_K:=\row{\Phi_{K,k}}_{k\in \K_r},~\bm L^\alpha=\Bdg\{L^\alpha_k\}_{k\in \K_l},  ~\alpha\in\{x,u,\zeta,\mu\},\\
		&\bm \Lambda_u:=\bm L^x\bm \Theta_u+\bm L^u+\bm L^\zeta\bm \Psi \bm F,~
		\bm \Lambda_\mu:=\bm L^x\bm \Theta_\mu+\bm L^\zeta\bm \Psi \bm G+\bm L^\mu,\\
		& 
		\bm \Lambda_0:=\bm L^x\bm \Phi_0, 
		\bm \vartheta_u:=\bm \Phi_K(\bm B+\bm D\bm \Psi \bm F), 
		\bm \vartheta_\mu:=\bm \Phi_K(\bm C+\bm D\bm \Psi \bm G),\\
		& \bm Q^1=\Bdg\{Q_k^1\}_{k\in \K_l},
		\bm R^{11}=\Bdg\{R_k^{11}\}_{k\in \K_l},
		\bm R^{12}=\Bdg\{R_k^{12}\}_{k\in \K_l},\\
		&\bm{\Sigma}=\Bdg\bigl(Q^1_K,\bm Q^1,\bm R^{11},\bm R^{12}\bigr),\bm \xi_0=\col{\Phi_{K,0}, \bm \Phi_0,\Z_{Km_1\times n},\bm \Lambda_0}\\ 
	&\bm \Delta=\row{\col{\bm \vartheta_u,\bm \Theta_u,\I_{Km_1},\bm \Lambda_u},\col{\bm \vartheta_\mu,\bm \Theta_\mu,\Z_{Km_1\times Kc_2},\bm \Lambda_\mu}}\end{align*}
 
	\bibliographystyle{ieeetr}
	\bibliography{GSE_References}
\end{document}